\documentclass{amsart}

\usepackage{amssymb,amsmath,latexsym,amsthm, url}
\usepackage[colorlinks]{hyperref}
\usepackage[english]{babel}

\newtheorem{thm}{Theorem}[section]
\newtheorem{theorem}[thm]{Theorem}
\newtheorem{lemma}[thm]{Lemma}
\newtheorem{corollary}[thm]{Corollary}

\theoremstyle{definition}
\newtheorem{remark}[thm]{Remark}

\numberwithin{equation}{section}


\newcommand{\BB}{\mathbb{B}}

\newcommand{\QQ}{\mathbb{Q}}
\newcommand{\ZZ}{\mathbb{Z}}
\newcommand{\FF}{\mathbb{F}}

\newcommand{\HH}{\mathbb{H}}
\newcommand{\RR}{\mathbb{R}}
\newcommand{\PP}{\mathbb{P}}

\newcommand{\CC}{\mathbb{C}}

\newcommand{\Oc}{\mathcal{O}}

\newcommand{\Lc}{\mathcal{L}}

\newcommand{\isoeq}{\cong}
\newcommand{\cech}{\vee}

\newcommand{\Gal}{\mathrm{Gal}}

\newcommand{\Fil}{\mathrm{Fil}}

\newcommand{\dR}{\mathrm{dR}}

\newcommand{\Aut}{\mathrm{Aut}}

\newcommand{\Spa}{\mathrm{Spa}}

\newcommand{\Hom}{\mathrm{Hom}}

\newcommand{\Lie}{\mathrm{Lie}}

\newcommand{\GL}{\mathrm{GL}}

\newcommand{\un}{\mathrm{un}}

\newcommand{\LT}{\mathrm{LT}}

\newcommand{\HT}{\mathrm{HT}}

\newcommand{\End}{\mathrm{End}}

\newcommand{\GH}{\mathrm{GH}}

\newtheorem{conjecture}{Conjecture}

\begin{document}

\title{Transcendence of the Hodge-Tate filtration}


\author{Sean Howe}
\email{sean.howe@utah.edu}

\subjclass[2010]{14L05, 14G22}

\keywords{Hodge-Tate filtration, p-divisible groups, formal groups, $j$-invariant, transcendence, transcendentals, Lubin-Tate tower, Hodge-Tate period map, Gross-Hopkins period map}

\thanks{\textit{Acknowledgements.} We thank Jared Weinstein for a helpful conversation about an earlier version of Theorem \ref{theorem:pdivTrans} and possible generalizations. We thank Matt Emerton and an anonymous referee for helpful comments and suggestions.}

\maketitle

\begin{abstract}
For $C$ a complete algebraically closed extension of $\QQ_p$, we show that a one-dimensional $p$-divisible group $G/ \Oc_C$ can be defined over a complete discretely valued subfield $L \subset C$ with Hodge-Tate period ratios contained in $L$ if and only if $G$ has CM, if and only if the period ratios generate an extension of $\QQ_p$ of degree equal to the height of the connected part of $G$. This is a $p$-adic analog of a classical transcendence result of Schneider which states that for $\tau$ in the complex upper half plane, $\tau$ and $j(\tau)$ are simultaneously algebraic over $\QQ$ if and only if $\tau$ is contained in a quadratic extension of $\QQ$. We also briefly discuss a conjectural generalization to shtukas with one paw.
\end{abstract}

\bigskip
\section{Introduction}

\subsection{Transcendence of $\tau$}
An elliptic curve $E$ over the complex numbers $\CC$ equipped with a basis for its integral Betti homology gives rise to a point 
\[ [\tau:1] \in \HH^\pm=\PP^1(\CC) \backslash \PP^1(\RR)\]
describing the position of the Hodge filtration on $H_1(E(\CC), \CC)$ with respect to the fixed basis (thus, $\tau$ is the ratio of two periods of a non-zero holomorphic differential on~$E$). The $GL_2(\ZZ)$-orbits on this set are in bijection with the isomorphism classes of elliptic curves over $\CC$. These isomorphism classes are also parameterized by an algebraic modulus, the $j$-invariant, which has the property that the elliptic curve $E_\tau$ corresponding to $\tau$ has a model over the field $\QQ(j(\tau)).$ A classical transcendence result of Schneider \cite{Schneider-Trans} gives:

\begin{theorem}[Schneider \cite{Schneider-Trans}]\label{thm:Schneider} For $\tau \in \HH^\pm$, the following are equivalent:
\begin{enumerate}
\item $\tau$ and $j(\tau)$ are both in $\overline{\QQ}$.
\item $[\QQ(\tau): \QQ]=2$.
\item $E_\tau$ has CM.
\end{enumerate}
\end{theorem}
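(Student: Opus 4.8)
The plan is to run the cycle $(3)\Rightarrow(1)\Rightarrow(2)\Rightarrow(3)$, isolating the two purely algebraic steps from the single transcendence step that carries all the weight. Writing $E_\tau=\CC/(\ZZ+\tau\ZZ)$, an endomorphism is multiplication by some $\beta\in\CC$ with $\beta(\ZZ+\tau\ZZ)\subseteq\ZZ+\tau\ZZ$, and such a $\beta\notin\ZZ$ exists exactly when $\beta$ and $\beta\tau$ both lie in $\ZZ+\tau\ZZ$; eliminating $\beta$ forces $\tau$ to satisfy a monic quadratic over $\QQ$, and since $\tau\notin\RR$ this is the same as $[\QQ(\tau):\QQ]=2$ with $\QQ(\tau)$ imaginary quadratic. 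This gives $(2)\Leftrightarrow(3)$ by pure lattice algebra. For $(3)\Rightarrow(1)$, the inclusion $\tau\in\overline{\QQ}$ is immediate once $\tau$ is quadratic, so the content is $j(\tau)\in\overline{\QQ}$: here I would invoke the first main theorem of complex multiplication, namely that having CM by an order $\Oc$ is invariant under $\Aut(\CC/\QQ)$ while there are only finitely many isomorphism classes of elliptic curves with CM by $\Oc$ (a set in bijection with $\Pic(\Oc)$). Hence the Galois orbit of $j(\tau)$ is finite and $j(\tau)$ is algebraic.

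The heart is $(1)\Rightarrow(2)$, the only implication requiring transcendence. Since $j(\tau)\in\overline{\QQ}$, the curve $E_\tau$ has a Weierstrass model $y^2=4x^3-g_2x-g_3$ over $\overline{\QQ}$, so I would first replace $\ZZ+\tau\ZZ$ by a homothetic lattice $\Lambda=\Omega\cdot(\ZZ+\tau\ZZ)$ whose invariants $g_2(\Lambda),g_3(\Lambda)$ are algebraic; this changes the periods but not their ratio $\tau$. The assertion then becomes Schneider's theorem that a lattice with algebraic invariants has transcendental period ratio unless the associated curve has CM, which together with the standing hypothesis $\tau\in\overline{\QQ}$ forces CM and hence $(2)$. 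To prove this I would deploy the Schneider--Lang criterion. The key algebraic input is that $\wp(z;\Lambda)$ and $\wp(\tau z;\Lambda)$ both have algebraic invariants once $\tau,g_2,g_3$ are algebraic, generate a field stable under $d/dz$, and are algebraically dependent over $\overline{\QQ}$ precisely when $\Lambda$ and $\tau^{-1}\Lambda$ are commensurable, i.e. exactly when $\tau$ is imaginary quadratic. In the non-CM case these two functions are therefore algebraically independent, and Schneider--Lang bounds the number of points at which both functions and their derivatives take values in a fixed number field.

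The main obstacle is producing enough common algebraic points to overflow that bound, and this is where the argument is genuinely delicate. The torsion points of $\Lambda$, at which $\wp(z;\Lambda)$ is automatically algebraic, are carried by $z\mapsto\wp(\tau z;\Lambda)$ to algebraic values only when $\tau\Lambda\otimes\QQ\subseteq\Lambda\otimes\QQ$, i.e. only when $\tau$ is already quadratic; in the non-CM case there is thus no geometric supply of common algebraic points, and the requisite points must instead be manufactured by the transcendence machinery itself. Concretely, I expect the technical crux to be the construction of an auxiliary function built from $\wp(z;\Lambda)$ and $\wp(\tau z;\Lambda)$, the verification that it vanishes to high order along a suitable arithmetic array of points, and the analytic growth and zero estimates needed to extract a small nonzero algebraic quantity contradicting a Liouville-type inequality. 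Modulo these estimates the scheme closes; alternatively, one may simply cite Schneider's original transcendence theorem on elliptic period ratios and feed it the algebraic model $\Lambda$ produced above, which is the route I would take if a self-contained analytic estimate is not the goal.
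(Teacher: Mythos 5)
First, a point of comparison: the paper does not prove this statement at all --- it is quoted from Schneider purely as motivation for the $p$-adic analog, so your fallback of ``simply cite Schneider's original transcendence theorem'' is in fact exactly what the paper does, and your two algebraic reductions are correct and standard. The equivalence $(2)\Leftrightarrow(3)$ via $\beta\Lambda\subseteq\Lambda$ forcing $b\tau^2+(a-d)\tau-c=0$ is fine, and $(3)\Rightarrow(1)$ via the finiteness of the $\Aut(\CC/\QQ)$-orbit of $j(\tau)$ (bounded by $\#\Pic(\Oc)$) is the right argument. The reduction of $(1)\Rightarrow(2)$ to a lattice $\Lambda$ homothetic to $\ZZ+\tau\ZZ$ with $g_2(\Lambda),g_3(\Lambda)\in\overline{\QQ}$ is also correct, and at that point Schneider's theorem on period ratios closes the cycle.

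However, your description of the Schneider--Lang step contains a genuine misunderstanding that would derail you if you tried to carry it out as written. You claim that in the non-CM case ``there is no geometric supply of common algebraic points'' and that the points must be ``manufactured by the transcendence machinery itself.'' This is backwards: the classical argument has a perfectly geometric supply of points. Writing $\Lambda=\ZZ\omega_1+\ZZ\omega_2$ with $\tau=\omega_1/\omega_2$, one does not need $\tau\Lambda\otimes\QQ\subseteq\Lambda\otimes\QQ$; one only needs a single line, and indeed $\tau\cdot\omega_2=\omega_1\in\Lambda$ always. Hence at the infinitely many distinct points $z_n=(n+\tfrac12)\omega_2$, $n\in\ZZ_{\geq 0}$, one has $\wp(z_n;\Lambda)=\wp(\omega_2/2;\Lambda)$ and $\wp(\tau z_n;\Lambda)=\wp((n+\tfrac12)\omega_1;\Lambda)=\wp(\omega_1/2;\Lambda)$, both algebraic $2$-division values (roots of $4x^3-g_2x-g_3$), with both derivatives vanishing there. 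These points, together with the algebraic independence of $\wp(z;\Lambda)$ and $\wp(\tau z;\Lambda)$ in the non-CM case (which you state correctly), already violate the Schneider--Lang bound; no auxiliary construction of points is needed beyond what Schneider--Lang itself internalizes. So your sketch of the ``technical crux'' misidentifies where the difficulty lies, but since you ultimately fall back on citing Schneider's period-ratio theorem --- which is all the paper does --- the overall proposal stands.
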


\subsection{An analog for $p$-divisible groups}
In this note, we prove a $p$-adic analog of Theorem \ref{thm:Schneider}, where the role of elliptic curves is taken up by one-dimensional $p$-divisible groups, $\CC$ is replaced by a complete algebraically closed $C/\QQ_p$, and $\QQ$ is replaced by any complete discretely valued subfield of $C$ (e.g., $\QQ_p$ or $\widehat{\QQ_p^\un}$, the completion of the maximal unramified extension of $\QQ_p$).

We fix some notation: for a $p$-divisible group $G / \Oc_C$, we denote by $\dim G$ the dimension of $G$ and by $\mathrm{ht} G$ the height of $G$. The Tate module of $G$,
\[ T_p G := \varprojlim G[p^n](C) \] 
is a free $\ZZ_p$-module of rank $\mathrm{ht} G$. We denote by $G^\circ$ the connected part of $G$, and by $\Lie G$ the tangent space to the identity element of $G^\circ$ (thinking of $G^\circ$ as a formal Lie group), which is a free $\Oc_{C}$-module of rank $\dim G$. We denote by $\omega_G$ the cotangent space to the identity element of $G^\circ$; it is dual to $\Lie G$. We denote by $G^\cech$ the dual $p$-divisible group (defined using Cartier duality), which satisfies $\mathrm{ht} G^\cech = \mathrm{ht} G$ and $\dim G^\cech = \mathrm{ht} G - \dim G$. Finally, we let $\ZZ_p(1) := T_p \mu_{p^\infty}$ be the Tate $\ZZ_p$-module, and for any $\ZZ_p$-module $M$ and $n \in \ZZ$, we define Tate twists
\[ M(n) := M\otimes_{\ZZ_p} \ZZ_p(1)^{\otimes n}. \]

After tensoring with $C$, the Tate module of a $p$-divisible group $G$ is equipped with a one-step Hodge-Tate filtration 
\begin{equation}\label{eqn:HTFilt} 0 \rightarrow \Lie G(1)\otimes C \rightarrow T_p G \otimes C \rightarrow \omega_{G^\cech} \otimes C \rightarrow 0. \end{equation}

If $G$ is a one-dimensional $p$-divisible group of height $n$ over $\Oc_C$, $\Lie(G)(1) \otimes C$ is one-dimensional, and, after twisting by $\ZZ_p(-1)$, the Hodge-Tate filtration (\ref{eqn:HTFilt}) gives a point
\[ \HT(G) \in \PP(T_p G \otimes C(-1)). \]
Following Tate \cite{Tate-pdiv}, we can describe $\HT(G)$ explicitly: there is a map
\[ \HT: T_p G^\cech \rightarrow \omega_G \]
given by viewing an element of $T_p G^\cech$ as a map from $G$ to $\mu_{p^\infty}$ and pulling back the invariant differential $\frac{dt}{t}$.  If we fix a basis $e_1,..,e_n$ for $T_p G(-1)$ giving an identification 
\[ \PP(T_p G \otimes C(-1)) \isoeq \PP^{n-1}(C) \]
and a basis $\partial$ for $\Lie G$, we have
\[ \HT(G) = [ (\HT(e_1^*), \partial) : \; (\HT(e_2^*), \partial) : ... :
(\HT(e_n^*), \partial) ]\] 
where $e_i^*$ is the dual basis for $T_p G^\cech$ under the natural duality $T_p G^\cech \isoeq T_p G(-1)^*$. Thus, in this presentation the homogeneous coordinates of $\HT(G)$ are the Hodge-Tate periods of $\partial$. The field of definition, $\QQ_p(\HT(G))$, which depends only on $G$, is generated by the ratios of these periods.

We say that a $p$-divisible group $G / \Oc_C$ has CM if $\End(G) \otimes \QQ_p$ contains a commutative semisimple algebra over $\QQ_p$ of rank equal to $\mathrm{ht}G$. Our main result is

\begin{theorem}\label{theorem:pdivTrans} Let $G / \Oc_C$ be a one-dimensional $p$-divisible group. The following are equivalent:
\begin{enumerate}
\item There is a complete discretely valued field $L \subset C$ such that $G$ can be defined over $\Oc_L$ and $\QQ_p(\HT(G)) \subset L$.
\item $[\QQ_p(\HT(G)) : \QQ_p] =\mathrm{ht} G^\circ $.
\item $G$ has CM.
\end{enumerate} 
\end{theorem}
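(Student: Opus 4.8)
The plan is to use the Scholze--Weinstein classification of $p$-divisible groups over $\Oc_C$ to turn everything into linear algebra: a one-dimensional height-$n$ group $G$ corresponds to a pair $(V,W)$ with $V=T_pG\otimes\QQ_p$ an $n$-dimensional $\QQ_p$-vector space and $W\subset V\otimes_{\QQ_p}C$ the Hodge--Tate line $\Lie G(1)$, so that $\HT(G)=[W]$ and $\End(G)\otimes\QQ_p=\{f\in\End_{\QQ_p}(V):f_C(W)\subseteq W\}$. The first step is to reduce to the connected case. Since $\Lie G=\Lie G^\circ$, the line $W$ lies in $V^\circ:=T_pG^\circ\otimes\QQ_p$, so $\QQ_p(\HT(G))=\QQ_p(\HT(G^\circ))$; moreover $G^\circ$ is connected exactly when $W$ lies in no proper $\QQ_p$-rational subspace, i.e. when, writing $W=C\cdot\sum_i x_i e_i$ in a $\QQ_p$-basis $e_i$ of $V^\circ$, the coordinates $x_1,\dots,x_h$ (with $h=\mathrm{ht}\,G^\circ$) are $\QQ_p$-linearly independent. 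Because the residue field of $\Oc_C$ is algebraically closed, the connected--\'etale sequence splits and the \'etale part is constant, automatically carrying a commutative semisimple algebra of rank $n-h$; hence CM for $G$ is equivalent to CM for $G^\circ$, and it suffices to treat $G^\circ$.

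Next I would prove $(2)\Leftrightarrow(3)$ by pure linear algebra over $\QQ_p$. Let $S=\mathrm{span}_{\QQ_p}(x_1,\dots,x_h)\subseteq C$; connectedness gives $\dim_{\QQ_p}S=h$, and after scaling $x_1=1$ the field $F:=\QQ_p(\HT(G))$ is the subfield of $C$ generated by $S$. Since $S\subseteq F$, condition $(2)$ (that $[F:\QQ_p]=h$) holds iff $S=F$, i.e. iff $S$ is multiplicatively closed. If $S$ is a field, identifying $V^\circ\cong S$ via $e_i\mapsto x_i$ lets $S$ act on $V^\circ$ by its regular representation; this action is by $\QQ_p$-rational matrices having $(x_i)$ as an eigenvector, hence preserves $W$, exhibiting CM by $F$ of degree $h$. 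Conversely a CM structure makes $V^\circ$ free of rank one over a field $E$ with $W$ the eigenline for a single embedding $\psi\colon E\hookrightarrow C$, so the coordinates are $\psi(\alpha_i)$ and $F=\psi(E)$ has degree $h$. For $(3)\Rightarrow(1)$ I would invoke CM theory: a one-dimensional CM $p$-divisible group is isogenous to a Lubin--Tate group for the local CM field $E$, which is defined over $\Oc_E$ and whose Hodge--Tate period is $\psi(E)\subset C$; taking $L$ a complete discretely valued field containing both a field of definition and $\psi(E)$ yields $(1)$, since $\QQ_p(\HT(G))=\psi(E)\subseteq L$.

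The heart of the argument, and the main obstacle, is $(1)\Rightarrow(2)$ --- the transcendence statement. Suppose $G$ is defined over $\Oc_L$ with $L$ complete discretely valued and $F=\QQ_p(\HT(G))\subseteq L$. Then $V$ is a crystalline representation of $\Gal(\overline{L}/L)$ via $\rho$, and the Hodge--Tate line $W$, being canonical, is stable under the semilinear Galois action. Writing $W=C\cdot\sum_i x_i e_i$ with $x_i\in L$ and using that $\Gal(\overline{L}/L)$ fixes $L$ pointwise, the semilinear invariance collapses to the purely $\QQ_p$-linear statement $\rho(g)\mathbf{x}=c(g)\,\mathbf{x}$ for all $g$, where $c\colon\Gal(\overline{L}/L)\to C^\times$ is a character. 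Thus $c(g)$ is an eigenvalue of the $\QQ_p$-rational matrix $\rho(g)$, hence algebraic over $\QQ_p$, and the $c$-eigenspace $V_c\subseteq V\otimes\overline{\QQ_p}$ is a $\Gal$-stable crystalline subrepresentation on which Galois acts through the single character $c$. I would then use $\dim G^\circ=1$ decisively: a representation on which Galois acts by a scalar character has all Hodge--Tate weights equal, while the Hodge--Tate graded piece of $V^\circ\otimes C$ containing $W$ is one-dimensional (multiplicity one). Since $0\neq\mathbf{x}\in W\cap(V_c\otimes C)$, the space $V_c\otimes C$ must coincide with that one-dimensional piece, forcing $\dim_{\overline{\QQ_p}}V_c=1$.

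Consequently $\mathbf{x}$ is defined over $\QQ_p(c)$ up to scaling, so the period is algebraic, and the coordinate identities $c(g)=\sum_i\rho(g)_{1i}x_i\in S$ and $c(g)\,x_k\in S$ show $\QQ_p(c)\cdot S\subseteq S$; as $1\in S\subseteq F=\QQ_p(c)(S)$ this forces $S=F=\QQ_p(c)$, whence $[\,\QQ_p(\HT(G)):\QQ_p\,]=\dim_{\QQ_p}S=h$, which is $(2)$. The technical points requiring care are the extension of $p$-adic Hodge theory to $L$ with possibly imperfect residue field (via Sen theory or reduction to the perfect case), the splitting of the connected--\'etale sequence, and the exact bookkeeping of Hodge--Tate weights; but the genuinely essential input is the multiplicity one of the weight carrying $W$, which is precisely where $\dim G=1$ enters and which drives the rigidity that makes the period algebraic.
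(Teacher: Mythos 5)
Your proposal is correct, and for the crucial implication $(1)\Rightarrow(2)$ it takes a genuinely different route from the paper. The reduction to the connected case, the equivalence $(2)\Leftrightarrow(3)$ via the coordinate span $S$ (a reformulation of the paper's Lemma \ref{lem:Stabilizer} on the stabilizer field of $W$, with the usual transpose bookkeeping for the regular representation), and the Lubin--Tate argument for $(3)\Rightarrow(1)$ all match the paper. Where you diverge is $(1)\Rightarrow(2)$: the paper notes that the linear Galois image lands in the stabilizer field $K=\End(G)\otimes\QQ_p$ and then invokes Tate's \emph{full-faithfulness} theorem (Theorem \ref{thm:Tate}) to conclude that $K$ contains its own centralizer in $\End_{\QQ_p}(V)$, hence has degree $n$, whereas you extract the character $c$ with $\rho(g)\mathbf{x}=c(g)\mathbf{x}$, observe $c(g)\in\overline{\QQ_p}$, and force the simultaneous eigenspace $V_c$ to be a line by a weight-multiplicity argument --- replacing full-faithfulness by the $L$-rationality of the Hodge--Tate exact sequence together with Tate's vanishing $H^0(\Gal(\overline{L}/L),C(j))=0$ for $j\neq 0$. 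The one step you should make explicit is why $V_c\otimes C$ is an \emph{isotypic} semilinear subrepresentation: this needs $c$ to take values in $L$, which follows from your own identity $c(g)=\sum_i\rho(g)_{1i}x_i$ with $x_i\in L$; then $V_c\otimes C\cong C(c)^{\oplus m}$, containment of $W\cong C(1)$ forces $C(c)\cong C(1)$, and the image of $V_c\otimes C$ in $(V\otimes C)/W\cong C^{n-1}$ vanishes by Tate, giving $m=1$. Your route has the advantage of exhibiting directly that the period ratios are algebraic over $\QQ_p$ (they lie in $\QQ_p(c)$); the paper's is shorter and purely algebraic once Tate's theorem is granted. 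Both arguments share the caveat you flag about complete discretely valued $L\subset C$ with possibly imperfect residue field, which the paper does not address either; and crystallinity of $V$ is never actually needed, only the Hodge--Tate property.
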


\begin{remark}
When the conditions of Theorem \ref{theorem:pdivTrans} hold, one can always find a complete discretely valued field $L\subset C$ and a $p$-divisible group $G'/ \Oc_L$ such that $G'_{\Oc_C} \isoeq G$ and $G'$ has CM over $\Oc_L$. It is not typically true, however, that \emph{every} $G' /\Oc_L$ such that $G'_{\Oc_C} \isoeq G$ has CM, even after allowing a finite extension of $L$. For example, over the ring of integers of a finite extension of $\QQ_p$, the only extensions of $\QQ_p / \ZZ_p$ by $\mu_{p^\infty}$ with CM will be those with Serre-Tate coordinate a root of unity, however, after passing to $\Oc_C$ all will have CM, as over $\Oc_C$ the extensions split.
\end{remark}

\begin{remark}
Theorem \ref{theorem:pdivTrans} can be used to produce transcendental elements of $C$ as follows: if $L$ is a complete discretely valued extension of $\QQ_p$ and $G$ is a formal group over $\Oc_L$ without CM, then we find that at least one of the period ratios of $G$ is contained in $\widehat{\overline{L}} \backslash \overline{L}$ (since any algebraic extension of $L$ is also discretely valued). The CM lifts of formal groups are well understood (cf. e.g. \cite{Gross-Canonical} and \cite{Yu-QuasiCan}) -- in particular, using the Gross-Hopkins period map it is easy to see that ``most" formal groups over complete discretely valued extensions of $\QQ_p$ do not have CM. 
\end{remark}

The main tools used in the proof of Theorem \ref{theorem:pdivTrans} are the Scholze-Weinstein \cite{ScholzeWeinstein-modulipdiv}  classification of $p$-divisible groups over $\Oc_C$ by the Hodge-Tate filtration, and Tate's~\cite{Tate-pdiv} theorem on full-faithfulness of the Tate module of a $p$-divisible group over a complete discretely valued field. We recall both results in Section \ref{sec:Recollections}. 

\begin{remark} The Scholze-Weinstein classification identifies the set of isomorphism classes of one-dimensional height~$n$  $p$-divisible groups over $\Oc_C$ with the set of $\GL_n(\ZZ_p)$-orbits in $\PP^{n-1}(C)$. If we restrict to formal groups, then the Hodge-Tate filtration lies in the Drinfeld upper half 
space $\Omega_{n-1}$ (constructed by removing from $\PP^{n-1}$ all lines that are contained in a proper $\QQ_p$-rational subspace). In particular, for $n=2$, we have 
\[ \Omega_1(C) = \PP^1(C) - \PP^1(\QQ_p) \]
which we can write with coordinate $[\tau_\HT: 1].$
Thus, for height $2$ formal groups Theorem \ref{theorem:pdivTrans} becomes a precise analog of Theorem \ref{thm:Schneider} -- the role of the $j$-invariant is played by the explicit statement that $G$ can be defined over $\Oc_L$. 
\end{remark}

\subsection{Period mappings}
No function playing the role of the $j$-invariant appears in Theorem \ref{thm:Schneider} because $\PP^n$ is not a well-behaved moduli space for the $p$-divisible groups we consider. After restricting to formal groups we can remedy this: let $G_0$ be the unique one-dimensional height $n$ formal group over $\overline{\FF_p}$, and let $\LT_n^\infty$ denote the infinite level Lubin-Tate space parameterizing deformations of $G_0$ with a basis of the Tate module, which is a preperfectoid space over $\mathrm{Frac}W(\overline{\FF_p})$ by work of Weinstein \cite{Weinstein-models}. It admits two period maps (cf. \cite{ScholzeWeinstein-modulipdiv}) -- the Hodge-Tate period map
\[ \pi_\HT : \LT_n^\infty \rightarrow \Omega_{n-1}, \]
which is described on $C$-points using the Hodge-Tate filtration (\ref{eqn:HTFilt}) as above, and the Gross-Hopkins \cite{GrossHopkins-equiv} period map
\[ \pi_\GH : \LT_n^\infty \rightarrow \PP^{n-1}. \]
The Gross-Hopkins period map factors through level zero Lubin-Tate space, and remembers the field of definition of a deformation of $G_0$ up to a finite extension. Fixing an embedding $W(\overline{\FF_p}) \rightarrow C$, we obtain

\begin{corollary}\label{cor:PeriodMap}
For $x \in \LT_{n}^\infty(C)$ with corresponding formal group $G_x / \Oc_C$, the following are equivalent:
\begin{itemize}
\item There is a complete discretely valued $L \subset C$ such that $\pi_\GH(x) \in \PP^{n-1}(L)$ and $\pi_\HT(x) \in \Omega_{n-1}(L)$. 
\item $[\QQ_p(\pi_\HT(x)):\QQ_p]=n$.
\item $G_x$ has CM. 
\end{itemize}

\end{corollary}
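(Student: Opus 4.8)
The plan is to deduce Corollary \ref{cor:PeriodMap} directly from Theorem \ref{theorem:pdivTrans} applied to the formal group $G_x$. Since $G_x$ is one-dimensional of height $n$ and formal, it is connected, so $\mathrm{ht}(G_x^\circ) = n$, and by the description of the Hodge-Tate period map on $C$-points we have $\pi_\HT(x) = \HT(G_x)$. Consequently the second and third bullets of the corollary are verbatim conditions (2) and (3) of the theorem for $G_x$, and so are already known to be equivalent. It therefore remains only to show that the first bullet is equivalent to condition (1) of the theorem.

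First I would note that, because $\pi_\HT(x) \in \Omega_{n-1}(C)$ holds automatically for a formal group and $\Omega_{n-1}(L) = \Omega_{n-1}(C) \cap \PP^{n-1}(L)$, the clause $\pi_\HT(x) \in \Omega_{n-1}(L)$ is precisely $\QQ_p(\HT(G_x)) \subseteq L$. Thus the Hodge-Tate clauses of corollary (1) and of theorem (1) coincide, and the only genuine difference is between ``$G_x$ can be defined over $\Oc_L$'' and ``$\pi_\GH(x) \in \PP^{n-1}(L)$''. Since both conditions are existential in $L$, and any finite or unramified extension of a complete discretely valued field is again complete discretely valued (and still contains $\QQ_p(\HT(G_x))$), I am free to replace $L$ by such extensions throughout; this is exactly the slack in the phrase that $\pi_\GH$ ``remembers the field of definition up to finite extension.''

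To pass from theorem (1) to corollary (1), I would take a model of $G_x$ over $\Oc_L$ and enlarge $L$ by an unramified extension so that $L \supseteq \widehat{\QQ_p^\un}$; rigidifying the special fiber against $G_0$ then realizes the model as a point of the level-zero Lubin-Tate deformation space over $\Oc_L$, whence $\pi_\GH(x) \in \PP^{n-1}(L)$, while $\QQ_p(\HT(G_x)) \subseteq L$ keeps $\pi_\HT(x) \in \Omega_{n-1}(L)$. Conversely, from $\pi_\GH(x) \in \PP^{n-1}(L)$ (after enlarging so that $L \supseteq \widehat{\QQ_p^\un}$) I would use that the Gross-Hopkins period map at level zero is étale and surjective: the image $x_0$ of $x$ in the deformation space lies over the $L$-point $\pi_\GH(x)$, so étaleness forces $x_0$ to be defined over a finite extension $L'/L$. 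As $L'$ is complete discretely valued and the rigid generic fiber is an open polydisk, the coordinates of $x_0$ lie in $\mathfrak{m}_{L'}$ and hence spread out to an $\Oc_{L'}$-integral deformation, giving a model of $G_x$ over $\Oc_{L'}$; since $\QQ_p(\HT(G_x)) \subseteq L \subseteq L'$, condition (1) of the theorem holds with $L'$.

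The main obstacle I anticipate is making this dictionary precise, i.e. rigorously matching ``model over $\Oc_L$'' with ``$L'$-point of the deformation space.'' This requires care with the rigidification data defining Lubin-Tate space (which can force passage to a field with residue field $\overline{\FF_p}$, i.e. an unramified extension containing $\widehat{\QQ_p^\un}$), with the valuative-criterion argument that an $L'$-point of the open generic ball extends to an $\Oc_{L'}$-deformation, and with the étaleness of $\pi_\GH$ guaranteeing that points above an $L$-rational period are defined over a \emph{finite} extension of $L$. Once this equivalence of fields of definition is established, the corollary follows immediately from Theorem \ref{theorem:pdivTrans}.
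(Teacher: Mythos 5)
Your proposal is correct and follows essentially the same route as the paper: both reduce the corollary to matching its first bullet with condition (1) of Theorem \ref{theorem:pdivTrans}, and both transfer fields of definition through the level-zero Gross--Hopkins period map using its surjectivity (and \'etaleness) to conclude that a point lying over an $L$-rational period is defined over a finite extension of $L$. Your write-up simply spells out the rigidification and spreading-out details that the paper's proof leaves implicit by citing \cite{GrossHopkins-equiv}, Corollary 23.21.
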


\subsection{Outline}
In Section \ref{sec:Recollections} we recall some results on $p$-divisible groups. In Section~\ref{sec:Proofs} we prove Theorem \ref{theorem:pdivTrans} and Corollary \ref{cor:PeriodMap}. In Section \ref{sec:Generalizations} we briefly discuss a conjectural generalization to higher dimensional $p$-divisible groups and shtukas with one paw.

\section{Recollections}\label{sec:Recollections}
In this section we recall two theorems on $p$-divisible groups: 

\begin{theorem}[Tate \cite{Tate-pdiv}, Theorem 4]\label{thm:Tate}
For $L$ a complete discretely valued extension of $\QQ_p$, the functor
\[ G \mapsto T_p G \]
is fully faithful from the category of $p$-divisible groups over $\Oc_L$ to the category of finite free $\ZZ_p$-representations of $\Gal(\overline{L}/L)$.   
\end{theorem}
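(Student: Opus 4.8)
The plan is to establish full faithfulness directly, i.e.\ to show that for all $p$-divisible groups $G, H$ over $\Oc_L$ the natural map
\[ \Hom_{\Oc_L}(G, H) \longrightarrow \Hom_{\ZZ_p[\Gamma]}(T_p G, T_p H), \qquad \Gamma := \Gal(\overline{L}/L), \]
is a bijection. Both sides are finitely generated free $\ZZ_p$-modules, so it is enough to prove injectivity integrally and then compare ranks after tensoring with $C := \widehat{\overline{L}}$. Injectivity is the easy direction: a homomorphism over the domain $\Oc_L$ is determined by its generic fibre, over which the $p$-power torsion is \'etale, so a map inducing $0$ on $T_p G = \varprojlim G[p^n](\overline{L})$ must vanish.

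Surjectivity is the crux, and I would deduce it from two inputs. The first, and the genuinely hard one, is Tate's computation of the Galois cohomology of the cyclotomic twists of $C$: one has $H^0(\Gamma, C(i)) = L$ for $i = 0$ and $H^0(\Gamma, C(i)) = 0$ for $i \neq 0$, together with the companion vanishing in degree one. This is the ramification-theoretic heart of the argument, proved via Tate's normalized traces and a careful study of the ramification of $\overline{L}/L$. The second input is the Hodge-Tate decomposition: the exact sequence
\[ 0 \rightarrow \Lie G(1) \rightarrow T_p G \otimes C \rightarrow \omega_{G^\cech} \rightarrow 0 \]
of the introduction splits over $C$, since the obstruction to splitting lies in a group annihilated by the degree-one vanishing, yielding a $\Gamma$-equivariant decomposition
\[ T_p G \otimes_{\ZZ_p} C \isoeq \Lie G(1) \;\oplus\; \omega_{G^\cech} \]
into pieces of Hodge-Tate weight $1$ and $0$.

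Granting these, I would finish by a linear-algebra computation. Writing $\Hom_{\ZZ_p[\Gamma]}(T_p G, T_p H) \otimes_{\ZZ_p} C \isoeq \Hom_C(T_p G \otimes C,\, T_p H \otimes C)^{\Gamma}$ and substituting the weight decomposition, the off-diagonal blocks are copies of $C(\pm 1)$ and so have no $\Gamma$-invariants; hence every Galois-equivariant map is forced to preserve the Hodge-Tate filtration. Its invariant part is thus a pair consisting of an $L$-linear map $\Lie G \to \Lie H$ and an $L$-linear map $\omega_{G^\cech} \to \omega_{H^\cech}$. The final step is to identify this block-diagonal, filtration-preserving space with the image of $\Hom_{\Oc_L}(G, H) \otimes_{\ZZ_p} C$: a genuine homomorphism of $p$-divisible groups induces exactly such a compatible pair on Lie algebras and invariant differentials, and Tate's structural analysis of homomorphisms shows the resulting map is surjective onto the weight-preserving part. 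A comparison of ranks then upgrades the injection above to a bijection. The main obstacle throughout is the cohomology vanishing $H^0(\Gamma, C(i)) = 0$ for $i \neq 0$; once it is in hand, the rigidity of the two-step Hodge-Tate filtration does the rest.
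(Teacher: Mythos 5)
This statement is recalled from Tate's paper and not proved in the paper itself, so your attempt must be measured against Tate's original argument. You have correctly identified the two deep inputs (the vanishing of $H^{i}(\Gamma, C(j))$ for $j\neq 0$ and the resulting Galois-equivariant Hodge--Tate decomposition), and the observation that the off-diagonal blocks are twists $C(\pm 1)$ with no invariants, so that every $\Gamma$-equivariant map preserves the decomposition, is a genuine step in Tate's analysis. But the overall strategy --- injectivity plus a rank comparison --- fails at the last step. The weight-preserving block $\Hom_L(\Lie G,\Lie H)\oplus\Hom_L(\omega_{G^\cech},\omega_{H^\cech})$ has $L$-dimension $d_Gd_H+(h_G-d_G)(h_H-d_H)$, and the map into it from $\Hom_{\Oc_L}(G,H)\otimes L$ (or from $\Hom_{\ZZ_p[\Gamma]}(T_pG,T_pH)\otimes L$) is \emph{not} surjective in general: for $G=H$ the $p$-divisible group of an elliptic curve over $\QQ_p$ whose Galois image is open in $\GL_2(\ZZ_p)$, one has $\End_{\Gamma}(T_pG)=\ZZ_p$ of rank $1$, while the weight-preserving block is $L\oplus L$ of dimension $2$. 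So the inequalities $\rank\,\Hom_{\Oc_L}(G,H)\leq\rank\,\Hom_{\Gamma}(T_pG,T_pH)\leq\dim_L(\text{weight-preserving part})$ cannot be promoted to equalities, and the assertion that ``Tate's structural analysis shows the resulting map is surjective onto the weight-preserving part'' is false --- it is precisely where the content of the theorem is being assumed. (There is also a scalar-extension slip: $\Hom_{\ZZ_p[\Gamma]}(T_pG,T_pH)\otimes_{\ZZ_p}C$ is not $\Hom_C(T_pG\otimes C,T_pH\otimes C)^{\Gamma}$; the correct statement is an injection of $\Hom_{\Gamma}(T_pG,T_pH)\otimes_{\ZZ_p}L$ into those invariants, which yields only an upper bound on the rank.)

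Tate's actual route is structured differently. Since $L$ has characteristic zero, finite flat group schemes over $L$ are \'etale and equivalent to finite Galois modules, so $\Hom_L(G_L,H_L)\rightarrow\Hom_{\Gamma}(T_pG,T_pH)$ is bijective essentially for free; the entire content of the theorem is that a homomorphism of the generic fibers extends over $\Oc_L$. Tate proves this by taking scheme-theoretic closures of the graphs of the maps $G_L[p^n]\rightarrow H_L[p^n]$ inside $G[p^n]\times H[p^n]$; these assemble into a $p$-divisible group over $\Oc_L$ mapping to $G$ by a homomorphism that is an isomorphism on generic fibers, and one then shows that an isogeny which is generically an isomorphism is an isomorphism, by comparing discriminants of $G[p^n]$ --- a power of $p$ depending only on $n$, the height, and the dimension. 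The Hodge--Tate theory enters only at this point, through the fact that the dimension is determined by the Galois module via $\dim G=\dim_L\bigl(T_pG\otimes C(-1)\bigr)^{\Gamma}$. Your two hard inputs are therefore the right ones, but they are deployed in the wrong place, and the closure-of-graphs and discriminant argument that actually produces the integral homomorphism is absent from your sketch.
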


\begin{theorem}[Scholze-Weinstein \cite{ScholzeWeinstein-modulipdiv}, Theorem B]\label{thm:SW} For $C$ a complete algebraically closed extension of $\QQ_p$, the functor 
\[ G \mapsto (T_p G, \Lie G \subset T_p G \otimes C(-1) ) \]
is an equivalence between the category of $p$-divisible groups over $\Oc_C$ and the category of pairs $(M, \Fil)$ consisting of a finite free $\ZZ_p$-module $M$ and a $C$-vector subspace $\Fil \subset M \otimes C(-1)$. 
\end{theorem}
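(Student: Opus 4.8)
The plan is to produce a quasi-inverse to the stated functor, splitting the verification into full faithfulness and essential surjectivity. The functor records two realizations of a $p$-divisible group $G/\Oc_C$: the \'etale realization $T_p G$, a finite free $\ZZ_p$-module, and the position of the Hodge-Tate filtration $\Lie G \subset T_p G \otimes C(-1)$, which is the mod-$\xi$ shadow of the crystalline/de Rham realization. The first reduction I would make is to record, in place of the subspace $\Fil$ alone, the full $B_{\dR}^+$-lattice $\Xi \subset T_p G \otimes_{\ZZ_p} B_{\dR}$ coming from the de Rham comparison isomorphism; because $G$ has Hodge-Tate weights in $\{0,1\}$ this lattice is \emph{minuscule}, lying between $T_p G \otimes B_{\dR}^+$ and $\xi^{-1}(T_p G \otimes B_{\dR}^+)$, and its relative position modulo $\xi$ recovers exactly the subspace $\Fil$. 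Thus Theorem \ref{thm:SW} becomes the assertion that $G \mapsto (T_p G, \Xi)$ is an equivalence onto the category of such minuscule lattice pairs, together with the elementary linear-algebra translation between minuscule lattices and subspaces $\Fil \subset M \otimes C(-1)$.

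For full faithfulness I would argue as follows. Faithfulness is the statement that $G$ is determined by its generic fibre together with its torsion: a homomorphism $f : G \to G'$ over $\Oc_C$ with $T_p f = 0$ kills all $p$-power torsion, hence vanishes on the generic fibre $G_C$ (which, being a $p$-divisible group over the algebraically closed field $C$, is recovered from $T_p G$), and therefore vanishes by flatness and density of the generic fibre. That $T_p f$ is automatically filtration-compatible is immediate from functoriality of $\Lie$ and of the de Rham comparison. The content is the reverse inclusion: a $\ZZ_p$-linear, filtration-compatible map must come from an actual homomorphism. I would deduce this from the rational statement --- that $\Hom(G,G')\otimes\QQ_p$ consists of exactly the maps of rational realizations compatible with the $B_{\dR}^+$-lattices --- by intersecting with the integral lattice $\Hom_{\ZZ_p}(T_p G, T_p G')$; the rational statement in turn rests on $p$-adic Hodge theory, playing the role over $C$ of Tate's Theorem \ref{thm:Tate}, with the transcendence of the filtration absorbed into the period ring rather than the base field.

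The heart of the matter is essential surjectivity: given an arbitrary minuscule pair $(M,\Xi)$ one must build a $p$-divisible group. Here I would pass first to the isogeny category and use the dictionary, due to Fargues, between $p$-divisible groups over $\Oc_C$ up to isogeny and modifications of the trivial bundle $V \otimes_{\QQ_p} \Oc_X$ on the Fargues-Fontaine curve $X$, supported at the point corresponding to $C$ and having all Harder-Narasimhan slopes in $[0,1]$; a minuscule $B_{\dR}^+$-lattice is precisely the datum of such a minuscule modification, from which one reads off an isocrystal whose associated $p$-divisible group realizes the rational data. The lattice $M \subset M\otimes\QQ_p$ then pins down an actual $p$-divisible group inside the isogeny class. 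An alternative, more hands-on route is Grothendieck-Messing deformation theory: classify $p$-divisible groups over $\Oc_C/p$ via Dieudonn\'e theory on the perfectoid tilt $\Oc_{C^\flat}$, and then lift across $\Oc_C/p \to \Oc_C$ by choosing a lift of the Hodge filtration, which is exactly the extra datum $\Fil$.

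The main obstacle, and the reason the theorem is genuinely deeper than Tate's Theorem \ref{thm:Tate}, is realizing filtrations $\Fil$ that are not defined over any discretely valued subfield of $C$ --- the transcendental subspaces that are the whole point of this note. Over a discretely valued field Tate's theorem already controls $p$-divisible groups by their Galois-stable Tate modules, but a general $C$-point of $\PP^{n-1}$ carries no such descent datum. Overcoming this is exactly what the perfectoid and Fargues-Fontaine technology buys: the functor $G \mapsto \Xi$ lands in a category of $B_{\dR}^+$-lattices that sees these transcendental positions, and the construction of $G$ from $(M,\Xi)$ produces a $p$-divisible group whose Hodge-Tate line may lie in $\PP^{n-1}(C) \setminus \PP^{n-1}(\overline{\QQ_p})$. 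I expect this essential-surjectivity step, manufacturing an honest $p$-divisible group out of transcendental linear-algebra data, to be the crux, with full faithfulness comparatively formal once the rational $p$-adic Hodge theory is in place.
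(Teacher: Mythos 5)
The paper does not prove this statement: Theorem \ref{thm:SW} is quoted verbatim as Theorem B of Scholze--Weinstein \cite{ScholzeWeinstein-modulipdiv} and is used as a black box (alongside Tate's theorem) in Section \ref{sec:Proofs}. So there is no internal proof to compare your attempt against, and the first thing to say is that within this paper the correct ``proof'' is simply the citation.

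Judged on its own terms, your outline is a faithful roadmap of how the result is actually established in the cited reference and its descendants --- the translation of $\Fil$ into a minuscule $\BB^+_\dR$-lattice $T_pG \otimes \BB^+_\dR \subset \Xi \subset \xi^{-1}(T_pG\otimes\BB^+_\dR)$, full faithfulness via a rational comparison plus intersection with integral lattices, and essential surjectivity via modifications of the trivial bundle on the Fargues--Fontaine curve --- but it is a sketch, not a proof, and the places where it waves its hands are exactly where the theorem lives. First, the ``rational statement'' underlying fullness is not an off-the-shelf piece of $p$-adic Hodge theory: over $C$ there is no Galois group, so nothing plays the role of Tate's Theorem \ref{thm:Tate} for free; identifying $\Hom(G,G')\otimes\QQ_p$ with lattice-compatible maps of rational realizations is itself the main content of Scholze--Weinstein's Theorem A (and, for formal groups, of Fargues's earlier work, as the paper's remark after Theorem \ref{thm:SW} notes). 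Calling full faithfulness ``comparatively formal'' therefore undersells a genuine theorem. Second, in the Grothendieck--Messing alternative for essential surjectivity, the datum you are handed is the Hodge--Tate position $\Fil \subset M\otimes C(-1)$ on the \emph{\'etale} side, whereas the lift of the Hodge filtration you must choose lives in the \emph{crystal} evaluated on $\Oc_C$; relating the two is precisely the integral comparison isomorphism, not an elementary translation, and crystalline Dieudonn\'e theory over the non-noetherian base $\Oc_C/p$ is itself delicate. Third, the surjectivity of the construction --- that every pair $(M,\Fil)$, including the transcendental ones, is hit --- amounts to surjectivity of the Hodge--Tate period map at infinite level, which you correctly identify as the crux but do not argue. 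In short: right skeleton, correct identification of where the difficulty sits, but every load-bearing step is delegated to the very theorems being proved.
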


\begin{remark}
We note that for 1-dimensional $p$-divisible formal groups, Theorem \ref{thm:SW} is due to Fargues \cite{Fargues-GroupesAnalytiques}. 
\end{remark}

\section{Proofs}\label{sec:Proofs}

In this section we prove Theorem \ref{theorem:pdivTrans} and then deduce Corollary \ref{cor:PeriodMap} from it. The key observation in the proof of Theorem \ref{theorem:pdivTrans} is that the combination of Tate's theorem (Theorem \ref{thm:Tate} above) and the fact that the Hodge-Tate filtration is determined by the Galois representation put strong restrictions on the field of definition of the Hodge-Tate filtration for a $p$-divisible group over a complete discretely valued field. 

To prove Theorem \ref{theorem:pdivTrans}, we will need a linear algebra lemma. For $F'/F$ an extension of fields, $V$ a finite dimensional vector space over $F$, and $W \subset V \otimes_F F'$ a one-dimensional subspace, we will denote by $F(W)$ the field of definition of $W$ inside of $F'$. It is the smallest subfield $L \subset F'$ such that there exists a subspace $W_L \subset V \otimes_F L$ with $W_L \otimes_L F = W$. Equivalently, it is the field generated over $F$ by the ratios of the homogeneous coordinates of $W$ as a point in $\PP(V \otimes F')$ with respect to a fixed $F$-basis of $V$. 

\begin{lemma}\label{lem:Stabilizer} Let $V$ be a finite dimensional vector space over a perfect field $F$, and let $F'/F$ be an extension containing an algebraic closure of $F$. If 
\[ W \subset V \otimes F' \]
is a line whose conjugates under $\Aut(F'/F)$ span $V \otimes F'$, then the stabilizer of $W$ in $\End_{F}(V)$ is isomorphic to a subfield $K \subset F(W)$ such that $[K:F] \leq \dim V$.  Furthermore, in this situation $K=F(W)$ if and only if $[F(W):F]=\dim V$ if and only if ${[K:F]=\dim V}$. 
\end{lemma}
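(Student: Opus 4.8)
The plan is to study the stabilizer $S := \Stab_{\End_F(V)}(W)$ and identify it with the field $K$ of the statement; the key point is that $S$ acts on the line $W$ through a character valued in $F(W)$. Each $\phi \in S$ preserves $W$, hence acts on it by a scalar $\lambda(\phi) \in F'$, and $\lambda : S \to F'$ is a homomorphism of $F$-algebras. If $\lambda(\phi)=0$ then $W \subseteq \Ker(\phi \otimes 1) = \Ker(\phi) \otimes_F F'$; the right-hand side is defined over $F$, hence stable under $\Aut(F'/F)$, so it contains all conjugates of $W$ and therefore, by the spanning hypothesis, all of $V \otimes F'$ --- forcing $\phi = 0$. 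Thus $\lambda$ is injective, so $S$ is a finite-dimensional commutative domain over $F$, i.e. a field $K$, realized as the subfield $\lambda(K) \subset F'$. Choosing an $F$-basis of $V$ and a generator $w = \sum_i a_i v_i$ of $W$, the relation $(\phi\otimes 1)w = \lambda(\phi) w$ reads $\lambda(\phi) = \sum_j M_{ij} a_j / a_i$ for the matrix $M$ of $\phi$ (any $i$ with $a_i \neq 0$), so $\lambda(K) \subseteq F(W)$. Finally $K \subseteq \End_F(V)$ makes $V$ a $K$-vector space, whence $[K:F]$ divides $\dim_F V$ and $[K:F] \le \dim V$.

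For the equivalences I would first reduce to $F' = C$ algebraically closed --- every automorphism in $\Aut(F'/F)$ extends to $C$, so the spanning hypothesis persists while $F(W)$ and $K$ are unchanged --- and then establish the a priori bound $[F(W):F] \ge \dim V$: the number $r$ of distinct $\Aut(F'/F)$-conjugate lines of $W$ is $\ge \dim V$ because they span, and $\le [F(W):F]$ because distinct conjugate lines restrict differently to $F(W)$ (using that $F$ is separable). Given this, (A) $\Rightarrow$ (B),(C) is immediate: if $K = F(W)$ then $\dim V \le [F(W):F] = [K:F] \le \dim V$. For (C) $\Rightarrow$ (A), if $[K:F] = \dim V$ then $V$ is one-dimensional over $K$, so the $\lambda$-eigenspace of $K$ acting on $V \otimes F'$ is a line, which must equal $W$; as this eigenspace is cut out by an idempotent in $K \otimes_F \lambda(K)$, it is defined over $\lambda(K) = K$, giving $F(W) \subseteq K$ and hence $K = F(W)$.

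The crux is (B) $\Rightarrow$ (C). Assuming $[F(W):F] = \dim V$, the bound forces $r = \dim V$, so the conjugate lines $W = W_1, \dots, W_{\dim V}$ are linearly independent and yield a decomposition $V \otimes F' = \bigoplus_i W_i$ that $\Aut(F'/F)$ permutes transitively. Every diagonal operator $\sum_i c_i p_i$, with $p_i$ the projection onto $W_i$, preserves $W_1 = W$, so the $F$-algebra $A$ of $\Aut(F'/F)$-invariant diagonal operators lies in $K$; by Galois descent $A \cong (F')^H$ with $\dim_F A = [(F')^H:F]$, where $H$ is the stabilizer of $W$, and $[(F')^H:F] \ge [F(W):F]$. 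The chain $[F(W):F] \le [(F')^H:F] = \dim_F A \le [K:F] \le [F(W):F]$ then collapses to equalities, so $[K:F] = \dim V$.

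I expect this last implication to be the main obstacle: the other directions are formal once the character $\lambda$ and the counting bound are available, whereas (B) $\Rightarrow$ (C) requires actually manufacturing endomorphisms of $V$, which is exactly where the transitive action on the conjugate lines together with Galois descent are needed. The reduction to $F'$ algebraically closed and the separability of $F$ are the technical inputs that make the descent and the conjugate count behave correctly.
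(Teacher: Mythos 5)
Your proof is correct, and its first half --- the character $\lambda$, its injectivity via the spanning hypothesis, hence $K$ a subfield of $F(W)$ with $[K:F]\leq \dim V$, and the direction $[K:F]=\dim V\Rightarrow K=F(W)$ via the splitting of $V\otimes F'\cong K\otimes_F F'$ into distinct characters --- coincides with the paper's. Where you genuinely diverge is the crux you correctly identified, $[F(W):F]=\dim V\Rightarrow [K:F]=\dim V$. The paper manufactures the required $F(W)$-action on $V$ by a duality trick: the natural map $V^*\to V^*\otimes_F F(W)/W_{F(W)}^\perp$ is an injection between $F$-vector spaces of the same dimension $n=[F(W):F]$ (injectivity again from the spanning hypothesis), hence an isomorphism, and the target is visibly an $F(W)$-module; transporting the action back and dualizing gives $F(W)\hookrightarrow K$. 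You instead decompose $V\otimes F'$ into the $n$ conjugate lines and descend the algebra of $\Aut(F'/F)$-invariant diagonal operators. Both work. Your route makes the geometry of the conjugate orbit explicit and explains \emph{where} the endomorphisms come from, but it forces the preliminary reduction to $F'$ algebraically closed: for a general extension the fixed field $(F')^{\Aut(F'/F)}$ can be larger than $F$, and without that reduction your algebra $A$ would not be known to land in $\End_F(V)$. The paper's duality argument sidesteps descent entirely and never needs the conjugate count $r$ beyond the bare fact that the conjugates span. One wording point: ``distinct conjugate lines restrict differently to $F(W)$'' is better phrased as: $\sigma(W)$ depends only on $\sigma|_{F(W)}$, so the number of conjugate lines is at most the number of $F$-embeddings of $F(W)$ into $F'$, hence at most $[F(W):F]$; that is the inequality your counting bound actually uses.
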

\begin{proof}
We denote $d=[F(W):F]$ (which could be $\infty$), and $n=\dim V$. We denote by $K$ the stabilizer of $W$ in $\End_F(V)$. 

The action of $K$ on the line 
\[ W_{F(W)} := W \cap V\otimes_F F(W) \]
induces a map from $K$ to $F(W)$. It is an injection because any element of $\End_{F}(V)$ that acts as zero on $W$ also acts as zero on all of its conjugates under $\Aut(F'/F)$, which span $V \otimes_F F'$. Thus we obtain $K \hookrightarrow F(W)$. In particular, $K$ is a field. Then $V$ is a $K$-vector space, so we find $[K:F]\leq n$.  

We now show $[K:F]=n$ if and only if $K=F(W)$ if and only if $[F(W):F]=n$. 

Suppose $[K:F]=n$. Then, $V$ is isomorphic to $K$ as a $K$-module. Because $F'/F$ contains $\overline{F}$, and $K/F$ is separable ($F$ is perfect), we find $V \otimes F'$ splits into $n$ distinct characters of $K$, and thus these are the only lines stabilized by $K$. Each of these lines is defined over an extension of degree $n$ of $F$, so that $[F(W):F]\leq n$. Since $K \subset F(W)$, $[F(W):F]=n$, and $K=F(W)$.  

Suppose $K=F(W)$. Because $W$ has $n$ distinct conjugates, we have $d \geq n$. Because $[K:F] \leq n$, we have $d \leq n$. Thus $d=n$, and $[K:F]=[F(W):F]=n$.  

Suppose $[F(W):F]=n$. It suffices to show $F(W) \hookrightarrow K$, since that implies $[K:F] \geq n$, and we obtain $[K:F]=n$ and $K=F(W)$.  We observe 
\[ V^* \isoeq (V^* \otimes F(W)) / W_{F(W)}^\perp \]
as $F$ vector spaces, since both have dimension $n$, and the map is injective (if it were not, there would be a non-zero linear form on $V$ defined over $F$ and vanishing on $W$, thus the conjugates of $W$ would not span $V \otimes F'$). The right hand side is a $F(W)$-vector space, so this isomorphism equips $V^*$ with an action of $F(W)$ preserving $W^\perp \subset V^* \otimes F'$, and thus dually equips $V$ with an action of $F(W)$ preserving $W \subset V \otimes F'$. This gives the desired inclusion $F(W) \hookrightarrow K$. 
\end{proof}

\begin{remark}\label{rem:HigherDim}
We will use Lemma \ref{lem:Stabilizer} in the proof of Theorem \ref{theorem:pdivTrans} to force the image of the Galois representation on the Tate module of a formal group to be abelian. For higher dimensional formal groups this step breaks down because for a higher dimensional filtration the stabilizers in Lemma \ref{lem:Stabilizer} are no longer necessarily commutative.  
\end{remark}

We will also need the following structural lemma:

\begin{lemma}\label{lem:LieSpanConn} If $G/\Oc_C$ is a $p$-divisible group, then 
\[ G \isoeq G^\circ \times (\QQ_p/\ZZ_p)^{\mathrm{ht} G - \dim G}, \] and the conjugates of $\Lie G$ under $\Aut(C / \QQ_p)$ span $T_p G^\circ(-1) \otimes C$.
\end{lemma}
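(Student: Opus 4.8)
The plan is to reduce to the connected case, identify the span of the conjugates via Galois descent over $C$, and then exploit the fact that the Hodge--Tate line of a formal group lies in the Drinfeld upper half space. First I would reduce to the case where $G$ is connected. Applying functoriality of the Hodge--Tate sequence to the projection $G \to G^{\et}$ and using $\Lie G^{\et} = 0$, the subspace $\Lie G(1) \subset T_p G \otimes C$ maps to zero in $T_p G^{\et} \otimes C$, hence lies in the kernel $T_p G^\circ \otimes C$ of $T_p G \otimes C \to T_p G^{\et} \otimes C$; equivalently $\Lie G = \Lie G^\circ \subset T_p G^\circ \otimes C(-1)$. Since $T_p G^\circ \otimes C(-1)$ is rational and hence $\Aut(C/\QQ_p)$-stable, all conjugates of $\Lie G$ lie there too, and it suffices to treat $G$ connected and show that the conjugates of its Hodge--Tate line span $T_p G \otimes C(-1)$.

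Next I would carry out the descent. Fixing a $\ZZ_p$-basis of $T_p G$ and a generator of $\ZZ_p(-1)$ identifies $T_p G \otimes C(-1) \isoeq V \otimes_{\QQ_p} C$, where $V := T_p G \otimes \QQ_p$, equivariantly up to the scalar $\chi(\sigma)^{-1}$; as this scalar does not affect lines or their $C$-span, the span $S$ of the conjugates is the same subspace computed for the trivial (second-factor) semilinear action on $V \otimes C$, and it is $\Aut(C/\QQ_p)$-stable. I would then show $S$ descends, i.e. $S = S_0 \otimes_{\QQ_p} C$ for a $\QQ_p$-subspace $S_0 \subset V$. Passing to top exterior powers, $\Lambda^{\dim S} S$ is a stable $C$-line $C \cdot \omega$ inside $\Lambda^{\dim S} V \otimes C$; expanding $\omega$ in a rational basis $\{e_I\}$ and using stability gives $\sigma(c_I) = \lambda_\sigma c_I$ for all $I$, so every coordinate ratio $c_I/c_{I_0}$ is fixed by $\Aut(C/\QQ_p)$. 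Granting $C^{\Aut(C/\QQ_p)} = \QQ_p$, these ratios lie in $\QQ_p$, so $\omega$ is a scalar multiple of a rational decomposable vector and $S$ is rational.

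It then remains to check that for $G$ connected the only rational subspace containing $\Lie G$ is the whole space, which forces $S_0 = V$. Suppose instead $\Lie G \subset V_0 \otimes C$ for a proper $\QQ_p$-rational subspace $V_0 \subsetneq T_p G \otimes \QQ_p(-1)$. By Theorem \ref{thm:SW}, the saturated sublattice $V_0 \cap T_p G$ equipped with the induced filtration $\Lie G$ corresponds to a sub-$p$-divisible group $G_0 \subsetneq G$, while the quotient pair $(T_p G/(V_0 \cap T_p G), 0)$ has vanishing filtration and hence corresponds to an étale $G/G_0$. A connected $p$-divisible group has no nonzero étale quotient, so $G_0 = G$ and $V_0$ is everything; this is exactly the statement that $\Lie G \in \Omega_{n-1}(C)$. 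Combined with the descent step, $S = S_0 \otimes C = V \otimes C = T_p G^\circ \otimes C(-1)$, as desired.

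The step I expect to be the main obstacle is the input $C^{\Aut(C/\QQ_p)} = \QQ_p$ for a \emph{general} complete algebraically closed $C$, which is what makes the descent in the second paragraph work. For $C = \CC_p$ this is classical, following from Ax--Sen--Tate together with the automatic continuity of automorphisms of $\CC_p$; for general $C$ one should reduce to this by noting $\CC_p \subset C$ and extending isometric automorphisms of $\CC_p$ across a transcendence basis of $C$, and verifying that such extensions remain isometric (hence extend to the completion). Carrying this out carefully is the delicate point, whereas the conceptual heart of the argument is the identification in the third paragraph of ``not contained in a proper rational subspace'' with ``having no nonzero étale quotient.''
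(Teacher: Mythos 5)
Your proposal is correct and follows essentially the same route as the paper: descend the $\Aut(C/\QQ_p)$-stable span of the conjugates to a $\QQ_p$-rational subspace using $C^{\Aut(C/\QQ_p)}=\QQ_p$, then apply Theorem \ref{thm:SW} to the quotient (which carries the zero filtration) to produce an \'etale quotient of the connected group $G$, forcing the span to be everything. The paper likewise invokes $C^{\Aut(C/\QQ_p)}=\QQ_p$ without further justification, so your extra care on the exterior-power descent and on that fixed-field fact is elaboration of the same argument rather than a different one.
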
 
\begin{proof}
Let $W'$ be the span of the conjugates of ${W=\Lie G}$ in $T_pG(-1) \otimes C$, and let $W'_0 = (W' \cap T_p G(-1)) (1)$, which we identify with a submodule of $T_p G$. $W'_0$ is a free $\ZZ_p$-module of rank $\dim W'$, saturated in $T_p G$ (here we use some standard descent results for vector spaces to deduce that $W' \cap T_p G \otimes \QQ_p$ has $\QQ_p$-dimension equal to $\dim W'$ and spans $W'$ -- cf., e.g.,\cite[Propositions 16.1 and 16.7]{Milne-AG}).

By the Scholze-Weinstein classification, Theorem \ref{thm:SW}, the pair $(W'_0, \Lie G)$ defines a $p$-divisible group $H$ over $\Oc_C$. Furthermore, any map from $H$ to $\QQ_p/\ZZ_p$ comes from a map from $(W'_0, \Lie G)$ to $(\ZZ_p, \{0\})$, and thus must be zero since it sends $\Lie G$ and all of its conjugates to zero. Thus, $H$ is connected. If we choose a free $\ZZ_p$-module $M$ of rank $m$ complementary to $W$ in $T_p G$ (which exists because $W'_0$ is saturated in $T_p G$), and a trivialization $\ZZ_p^m \isoeq M$, then the resulting isomorphism
\[ (W'_0, \Lie G) \times (\ZZ_p^m, \{0\}) \isoeq (T_p G, \Lie G) \]
gives an isomorphism 
\begin{equation}\label{eqn:prelimDecomp} H \times (\QQ_p/\ZZ_p)^m \rightarrow G. \end{equation}
Since $H$ is connected, we deduce $H \isoeq G^\circ$, and $m = \dim G - \mathrm{ht} G$. Thus (\ref{eqn:prelimDecomp}) is the desired decomposition, and the statement about the conjugates of $\Lie G$ follows from the construction of $W'$.  
\end{proof}

\begin{proof}[Proof of Theorem \ref{theorem:pdivTrans}]
We first reduce to the case of $G$ connected (i.e. to $G$ a formal group). Observe that passing to the connected component $G^\circ$ does not change $\QQ_p(\HT(G))$, so that condition $(2)$ of the theorem is unchanged. Furthermore, if $G$ can be defined over $\Oc_L$ as in $(1)$, then so can $G^\circ$, and vice versa by Lemma \ref{lem:LieSpanConn}. Thus, if we have $(1) \iff (2)$ for $G$ connected, we obtain $(1) \iff (2)$ for all $G$. Furthermore, the decomposition of Lemma \ref{lem:LieSpanConn} and the Scholze-Weinstein classfication (Theorem \ref{thm:SW}) imply that 
\[  \End(G) = \begin{pmatrix} \End(G^\circ) & \Hom( (\QQ_p/\ZZ_p)^{n-\mathrm{ht}G^\circ}, G^\circ ) \\ 0 & M_{n- \mathrm{ht}G^\circ} (\QQ_p) \end{pmatrix}. \]
Here, the lower left is 0 because any map from a connected finite group scheme to an \'{e}tale finite group scheme is zero (alternatively, this can be seen from the Scholze-Weinstein classification). From this we see that $(2) \iff (3)$ for $G$ connected implies $(2) \iff (3)$ for all $G$. 

From now on we will assume $G$ is connected. Denote by $W$ the Hodge-Tate filtration in $T_p G(-1) \otimes C$. By Lemma \ref{lem:LieSpanConn}, the conjugates of $W$ span $T_p G(-1)\otimes C$. Note $\QQ_p(W)$ is the field $\QQ_p(\HT(G))$ in the statement of the theorem. Now, as a consequence of the Scholze-Weinstein classification (Theorem \ref{thm:SW}), $\End(G) \otimes \QQ_p$ is equal to the stabilizer of $W$ in $\End(T_pG \otimes \QQ_p)$. We then obtain $(2) \iff (3)$ from Lemma \ref{lem:Stabilizer}. In general, we denote this stabilizer, which is equal to $\End G \otimes \QQ_p$, by $K$. By Lemma \ref{lem:Stabilizer}, it is a field with $[K:\QQ_p]\leq n$. 

We now show $(1) \implies (2)$. We assume $(1)$, and by abuse of notation write $G/\Oc_L$ for some $p$-divisible group over $\Oc_L$ with base change to $\Oc_C$ equal to $G$. Because the induced $L$-semilinear action of $\Gal(\overline{L}/L)$ on $T_p G(-1) \otimes L$ preserves the Hodge-Tate filtration and is in fact $L$-linear, the line $W$ is preserved by the linear action of $\Gal(\overline{L}/L)$. On the other hand, the image of $\Gal(\overline{L}/L)$ is also contained in $\End_{\QQ_p}(T_p G(-1) \otimes \QQ_p)$. Thus, Galois acts through $K$, and Tate's theorem (Theorem \ref{thm:Tate}) implies that the centralizer of $K$ also acts by isogenies. So, we must have that $K$ contains its centralizer in $\End_{\QQ_p}(T_p G(-1) \otimes \QQ_p)$, which implies that $[K:\QQ_p]=n$. Thus, by Lemma \ref{lem:Stabilizer}, $[\QQ_p(\HT(G)):\QQ_p]=n.$

Finally we show $(3) \implies (1)$. For a fixed $[K:\QQ_p]=n$, our computations so far along with the Scholze-Weinstein classification (Theorem \ref{thm:SW}) show that isogeny classes of $G$ with CM by $K$ correspond to orbits of $GL_n(\QQ_p)$ on $\Omega_{n-1}(K)$. There is a unique such orbit, corresponding to bases for $K/\QQ_p$ up to $\QQ_p^\times$ homothety. Any Lubin-Tate formal group $G/\Oc_K$ for $K$ is contained in this isogeny class and any group isogenous to it is defined over the ring of a integers of a finite extension of $K$, thus we conclude any formal group with CM by $K$ can in fact be defined over the ring of integers of a finite extension of $\QQ_p$. 

\end{proof}

\begin{proof}[Proof of Corollary \ref{cor:PeriodMap}]
For a point $x \in \LT_n^\infty$, we will show (1) of Corollary \ref{cor:PeriodMap} is equivalent to (1) of Theorem \ref{theorem:pdivTrans} for $G_x$. It suffices to show that if $\pi_\GH(x)$ is in a discretely valued extension $L$ of $\mathrm{Frac}W(\overline{k})$ then $G_x$ can be defined over a finite extension of $L$. As in \cite{GrossHopkins-equiv}, Corollary 23.21, $\pi_\GH$ is surjective (at level 0) on $\overline{L}$ points. The fibers of $\pi_{\GH}$ contain isogenous groups, and thus there is some $G'$ isogenous to $G_x$ defined over a finite extension of $L$. But then $G_x$ is also defined over a finite extension of $L$, since the kernel of an isogeny from $G'$ to $G_x$ is defined over a finite extension.  
\end{proof}

\section{Generalizations}\label{sec:Generalizations}

In the archimedean setting, an analog of Theorem \ref{thm:Schneider} holds for abelian varieties of higher dimension (cf. \cite{Cohen-Gen} and \cite{ShigaWolfart-Gen}). Our method of proof in the $p$-adic setting does not obviously generalize to $p$-divisible groups of dimension~${\geq 1}$ (cf. Remark \ref{rem:HigherDim}), however, it is natural to conjecture the analogous result.

We might even go further: $p$-divisible groups give the simplest examples of shtukas with one paw over $\Spa C^\flat$ as in \cite{Weinstein-ScholzeLectures}. By a result of Fargues (cf. \cite[Theorem 12.4.4]{Weinstein-ScholzeLectures}), a shtuka with one paw is equivalent to a pair consisting of a finite free $\ZZ_p$-module $M$ and a $\BB^+_\dR$-lattice $\Lc \subset M \otimes \BB_\dR$. The lattice $\Lc$ induces a filtration on $M \otimes \BB_{\dR}$,  thus by restriction on $M \otimes \BB^+_\dR$, and then, via specialization along the natural map $\theta : \BB^+_\dR \rightarrow C$, on $M \otimes C$. The resulting filtration on $M \otimes C$ is called the Hodge-Tate filtration. This generalizes the Hodge-Tate filtration on a $p$-divisible group, in which case $M=T_p G$ and $\Lc$ is uniquely determined by the Hodge-Tate filtration and the requirement that it lie in the natural minuscule Schubert cell relative to $M \otimes \BB^+_\dR$. We note that, in general, the Hodge-Tate filtration can have multiple steps! 

We say a shtuka $(M, \Lc)$ has CM if $(M\otimes\QQ_p, \Lc)$ admits endomorphisms by a semi-simple commutative algebra over $\QQ_p$ of dimension equal to the rank of $M$. 

There is also a natural analog of being defined over a complete discretely valued subfield in this setting: 
Given a complete discretely valued field $L \subset C$ and a lattice $M$ in a deRham representation of $\Gal(\overline{L}/L)$, we obtain a shtuka with one paw from the pair
\[(M, (M \otimes \BB_\dR)^{\Gal(\overline{L}/L)} \otimes_{L} \BB^+_\dR ). \]
We will say a shtuka with one paw is \emph{arithmetic} if it is isomorphic to one of this form. We formulate the optimistic

\begin{conjecture}
If $(M, \Lc)$ is an arithmetic shtuka with one paw with Hodge-Tate filtration defined over a complete discretely valued subfield of $C$ then $(M,\Lc)$ has CM.
\end{conjecture}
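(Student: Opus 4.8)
The plan is to transpose the proof of Theorem~\ref{theorem:pdivTrans} to the de Rham setting, using the $\BB^+_\dR$-lattice classification of shtukas with one paw in place of the Scholze--Weinstein classification (Theorem~\ref{thm:SW}). Write the arithmetic shtuka as $(V,\Lc)$ for a de Rham representation $V$ of $G_L := \Gal(\overline{L}/L)$, so that $\Lc = (V \otimes_{\QQ_p} \BB_\dR)^{G_L} \otimes_L \BB^+_\dR$, and let $W \subset V \otimes_{\QQ_p} C$ be the Hodge--Tate filtration obtained by specializing $\Lc$ along $\theta$. Replacing $L$ by a finite extension and twisting to normalize the Hodge--Tate weights --- neither of which affects the de Rham hypothesis or the CM property --- I would first arrange that $W$ is $L$-rational as a point of the appropriate flag variety of $V \otimes C$; this is a genuine condition on the Hodge--Tate periods, since the cyclotomic twists appearing in the Hodge--Tate decomposition prevent $W$ from being automatically $L$-rational.

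I would then reproduce the containment of the Galois image in the stabilizer of $W$, exactly as in Theorem~\ref{theorem:pdivTrans}. Because $V$ is Hodge--Tate, $W$ is stable under the semilinear action of $G_L$; writing $W = W_L \otimes_L C$ with $W_L \subset V \otimes_{\QQ_p} L$ and using that on $V \otimes_{\QQ_p} L$ this action is $L$-linear (the cyclotomic twist is scalar-valued and $G_L$ fixes $L$), the $\QQ_p$-linear Galois action preserves $W_L$, so the image of $G_L$ in $\End_{\QQ_p}(V)$ lands in $K := \Stab(W)$. Conversely, any $G_L$-equivariant endomorphism preserves the functorial filtration $W$, whence $\End_{G_L}(V) \subseteq K$; and since the Galois image lies in $K$, writing $\mathrm{cent}(K)$ for the centralizer of $K$ in $\End_{\QQ_p}(V)$ we have $\mathrm{cent}(K) \subseteq \End_{G_L}(V)$. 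Combining,
\[ \mathrm{cent}(K) \subseteq \End_{G_L}(V) \subseteq K, \]
so that $K$ contains its own centralizer.

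In the one-dimensional case the proof of Theorem~\ref{theorem:pdivTrans} concludes here via Lemma~\ref{lem:Stabilizer}: when $W$ is a \emph{line} whose conjugates span, $K$ is forced to be a field, hence equal to its own centralizer, so $[K:\QQ_p] = \dim V$ and $V$ has CM. For a shtuka with one paw, however, $W$ is in general a multi-step filtration and $K = \Stab(W)$ is a parabolic subalgebra, which need not be commutative; this is precisely the failure recorded in Remark~\ref{rem:HigherDim}, and a self-centralizing subalgebra of $\End_{\QQ_p}(V)$ need not contain a commutative \'etale subalgebra of rank $\dim V$. A second subtlety, absent in the minuscule case, concerns which endomorphism algebra one is really computing: in the $p$-divisible setting $\Stab(\Lc) = \Stab(W) = \End(G_{\Oc_C}) \otimes \QQ_p$ by the minuscule case of Theorem~\ref{thm:SW}, and this coincides with $\End_{G_L}(V)$ by Tate's theorem (Theorem~\ref{thm:Tate}); but for a non-minuscule paw the lattice $\Lc$ retains strictly more information than $W$, so $\Stab(\Lc)$ may be properly contained in $K$, and its identification with the Galois-equivariant endomorphisms is no longer automatic.

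I expect the commutativity of $K$ to be the essential obstacle, and I would attack it by using the de Rham structure beyond its Hodge--Tate shadow. By the $p$-adic monodromy theorem $V$ becomes semistable over a finite extension of $L$, where it is described by a filtered $(\phi,N)$-module $D$, and its Sen operator is semisimple with integer eigenvalues equal to the Hodge--Tate weights. The goal would be to show that the $L$-rationality of $W$ forces the Frobenius $\phi$, the monodromy $N$, and the Hodge filtration on $D$ to be simultaneously stabilized by a single commutative semisimple subalgebra of rank $\dim V$ --- equivalently, that the Galois image is abelian and generates an \'etale algebra of full rank --- thereby producing the CM. Ruling out the non-commutative self-centralizing configurations that the linear algebra permits, by exploiting the interaction of $\phi$ and $N$ with the $L$-rational Hodge--Tate filtration, is the crux, and is exactly where the method of Theorem~\ref{theorem:pdivTrans} ceases to apply.
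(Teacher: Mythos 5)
You should first note that the statement you are addressing is stated in the paper as a \emph{conjecture}: the author explicitly says the method of Theorem \ref{theorem:pdivTrans} ``does not obviously generalize'' (Remark \ref{rem:HigherDim}) and offers no proof, so there is nothing in the paper to compare your argument against. Read on its own terms, your write-up is not a proof either: it is an accurate transposition of the first half of the one-dimensional argument, followed by an honest admission that the decisive step is open. Concretely, the part that does go through is the sandwich $\mathrm{cent}(K) \subseteq \End_{G_L}(V) \subseteq K$, so that the stabilizer $K$ of the Hodge--Tate filtration contains its own centralizer. But the conclusion you need --- a commutative semisimple subalgebra of rank $\dim V$ acting on the \emph{shtuka} --- does not follow from $K$ being self-centralizing: for a multi-step filtration $K$ is typically a parabolic, hence non-commutative, and a self-centralizing subalgebra of $\End_{\QQ_p}(V)$ need not contain an \'etale subalgebra of full rank. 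This is exactly the obstruction the paper records, and your proposed remedy (``I would attack it by\dots'', ``The goal would be to show\dots'') supplies no mechanism by which $L$-rationality of the Hodge--Tate filtration forces $\phi$, $N$, and the Hodge filtration on the associated filtered $(\phi,N)$-module to be simultaneously normalized by a commutative algebra of rank $\dim V$.

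A second unresolved point, which you flag but do not close, is that even if $K = \Stab(W)$ were shown to contain a commutative \'etale subalgebra of full rank, CM for the shtuka requires endomorphisms preserving the $\BB^+_\dR$-lattice $\Lc$, not merely its Hodge--Tate shadow $W$; since $\Stab(\Lc)$ may be strictly smaller than $K$ when the paw is not minuscule, and there is no analog of Tate's full-faithfulness identifying $\Stab(\Lc)$ with $\End_{G_L}(V)$ available here, the passage from a large $K$ to actual CM of $(V,\Lc)$ is an additional gap. In short: your diagnosis of where Theorem \ref{theorem:pdivTrans} stops applying is correct and matches the paper's own assessment, but the statement remains a conjecture and your proposal does not prove it.
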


\begin{remark}
For a CM shtuka, the Hodge-Tate filtration is algebraic since any sub-space preserved by the CM algebra $K$ is a direct sum of the $1$-dimensional character spaces for $K$. 
\end{remark}

\bigskip

\end{document}